\documentclass[10pt, conference]{IEEEtran}

\pdfoutput=1

\usepackage[numberinsection]{prelude}
\usepackage[referable]{threeparttablex}

\newtoggle{long}
\toggletrue{long}

\setlist{nosep}
\setlength{\textfloatsep}{\medskipamount}

\makeatletter
\renewlist{tablenotes}{enumerate}{1}
\setlist[tablenotes]{
  label=\tnote{\alph*},
  ref=\alph*,
  itemsep=\z@,
  topsep=\smallskipamount,
  partopsep=\z@skip,
  parsep=\z@,
  itemindent=10pt,
  labelindent=*,
  labelsep=\z@,
  leftmargin=\z@,
  align=left,
  before={\footnotesize}
}
\makeatother

\newcounter{priorproof}
\cCrefname{priorproof}{Prior Proof}{Prior Proofs}
\newenvironment{priorproof}[1]{%
  \refstepcounter{priorproof}
  \begin{namedobservation*}{Prior Proof~\thepriorproof}
    #1.
    \begin{description}[font={\normalfont\itshape}, nosep, beginpenalty=10000]
}{%
    \end{description}
  \end{namedobservation*}
  \ignorespacesafterend%
}

\makeatletter
\let\oldtextsubscript\textsubscript
\renewcommand{\textsubscript}[1]{%
  \ifdefstring{\f@shape}{it}
    {\kern-0.05em\oldtextsubscript{#1}\kern0.05em}
    {\oldtextsubscript{#1}}}
\makeatother

\let\oldthefootnote\thefootnote
\newcommand{\manualfootnote}[2]{%
  \let\thefootnote#1\footnotetext{#2}%
  \let\thefootnote\oldthefootnote}

\newcommand{\mathpar}[2]{\text{\parbox{#1}{\strut#2\strut}}}

\newcommand{\requireEmpty}[1]{%
  \ifempty{#1}{}{%
    \PackageError{mg1-gittins}
      {this macro needs an empty argument}
      {you passed a nonempty argument}}}

\newcommand{\unit}{\textsc}

\newcommand{\sysMa}[1]{\requireEmpty{#1}M/G\textsubscript{MP}/1}

\newcommand{\sys}[1]{\requireEmpty{#1}M\textsubscript{B}/G\textsubscript{MP}/1}

\newcommand{\X}{\mathbb{X}}
\newcommand{\preempt}{\mathsf{P}}
\newcommand{\nonpreempt}{\mathsf{NP}}
\newcommand{\Xnew}{X_{\mathsf{new}}}
\newcommand{\Xbatch}[1][]{\ifempty{#1}{\mathbf}{}X_{\mathsf{batch}\ifempty{#1}{}{, #1}}}
\newcommand{\xdone}{x_{\mathsf{done}}}
\newcommand{\Y}{\mathbb{Y}}
\newcommand{\Yopt}[1][r]{\Y^*\ifempty{#1}{}{\mkern-1.5mu(#1)}}

\newcommand{\hold}[1][x, r]{\mathop{}\!\mathsf{hold}\ifempty{#1}{}{(#1)}}
\newcommand{\Hold}[1][x, r]{\mathop{}\!\mathsf{Hold}\ifempty{#1}{}{(#1)}}
\newcommand{\serve}[1][x, r]{\mathop{}\!\mathsf{serve}\ifempty{#1}{}{(#1)}}
\newcommand{\Serve}[1][x, r]{\mathop{}\!\mathsf{Serve}\ifempty{#1}{}{(#1)}}
\newcommand{\game}[1][x, r]{\mathop{}\!\mathsf{game}\ifempty{#1}{}{(#1)}}
\newcommand{\rank}[1][x]{\mathop{}\!\mathsf{rank}\ifempty{#1}{}{(#1)}}
\newcommand{\ix}[1][x]{\mathop{}\!\mathsf{index}\ifempty{#1}{}{(#1)}}

\def\r-{$r$\=/}

\begin{document}

\title{The Gittins Policy in the M/G/1 Queue%
  \iftoggle{long}{\large\\[5pt](Extended and Revised Version)\vspace{-13pt}}{}}
\author{%
  \IEEEauthorblockN{Ziv Scully\makebox[0pt][l]{\textsuperscript{*}}}%
  \IEEEauthorblockA{%
    Carnegie Mellon University\\
    Pittsburgh, PA, USA\\
    zscully@cs.cmu.edu}%
  \and
  \IEEEauthorblockN{Mor Harchol-Balter\makebox[0pt][l]{\textsuperscript{*}}}%
  \IEEEauthorblockA{%
    Carnegie Mellon University\\
    Pittsburgh, PA, USA\\
    harchol@cs.cmu.edu}}
\maketitle

\iftoggle{long}{\thispagestyle{plain}\pagestyle{plain}}{}

\begin{abstract}
  The Gittins policy is a highly general scheduling policy
  that minimizes a wide variety of mean holding cost metrics in the M/G/1 queue.
  Perhaps most famously, Gittins minimizes mean response time in the M/G/1
  when jobs' service times are unknown to the scheduler.
  Gittins also minimizes weighted versions of mean response time.
  For example, the well-known ``$c\mu$~rule'',
  which minimizes class-weighted mean response time in the multiclass M/M/1,
  is a special case of Gittins.

  However, despite the extensive literature on Gittins in the M/G/1,
  it contains no fully general proof of Gittins's optimality.
  This is because Gittins was originally developed for
  the multi-armed bandit problem.
  Translating arguments from the multi-armed bandit to the M/G/1
  is technically demanding,
  so it has only been done rigorously in some special cases.
  The extent of Gittins's optimality in the M/G/1 is thus not entirely clear.

  In this work we provide the first fully general proof
  of Gittins's optimality in the M/G/1.
  The optimality result we obtain is even more general than was previously known.
  For example, we show that Gittins minimizes mean slowdown
  in the M/G/1 with unknown or partially known service times,
  and we show that Gittins's optimality holds under batch arrivals.
  Our proof uses a novel approach that works directly with the M/G/1,
  avoiding the difficulties of translating from the multi-armed bandit problem.
\end{abstract}

\section{Introduction}
\label{sec:intro}

\manualfootnote{*}{Supported by NSF grant nos. CMMI-1938909 and CSR-1763701
  and a Google Faculty Award.}

Scheduling to minimize mean holding cost in queueing systems
is an important problem.
Minimizing metrics such as mean response time,
weighted mean response time,
and mean slowdown
can all be viewed as special cases of minimizing holding cost
\citep{brumelle_relation_1971, heyman_relation_1980}.\footnote{%
  A job's \emph{response time} is the amount of time
  between its arrival and completion.
  Jobs may be sorted into \emph{classes}
  which are weighted by importance.
  A job's \emph{slowdown} is the ratio between
  its response time and service time.}
In single-server queueing systems,
specifically the M/G/1 and similar systems,
a number of scheduling policies
minimize mean holding cost in various special cases.
Two famous examples are the \emph{Shortest Remaining Processing Time (SRPT)} policy,
which minimizes mean response time
when service times are known to the scheduler,
and the ``$c\mu$~rule'',
which minimizes weighted mean response time
in the multiclass M/M/1 with unknown service times.

It turns out that there is a policy
that minimizes mean holding cost in the M/G/1
under very general conditions.
This policy, now known as the \emph{Gittins} policy
after one of its principal creators
\citep{gittins_multi-armed_1989, gittins_multi-armed_2011},
has a relatively simple form.
Gittins assigns each job an \emph{index},
which is a rating roughly corresponding to how valuable it would be
to serve that job.
A job's index depends only on its own state,
not the state of any other jobs.
Gittins then serves the job of maximal index at every decision time.
The ``magic'' of Gittins is in how it determines each job's index,
which we describe in detail in \cref{sec:gittins}.
SRPT and the $c\mu$~rule are both special cases of Gittins.

\begin{table}
  \setlength{\tabcolsep}{0pt}
  \centering
  \begin{threeparttable}
    \caption{Gittins Optimality Results for M/G/1-Like Queues}
    \label{tab:prior}
    \begin{tabular*}{\linewidth}{@{\extracolsep{\fill}}lllll@{}}
      \toprule
      \emph{Holding Cost}
      & \emph{Model}
      & \emph{Preemption}
      & \emph{Service Times}
      & \emph{Prior Proofs}\kern0.05em\tnotex{tnote:prior_proofs}
      \\
      \midrule
      All equal
      & M/G/1
      & allowed
      & known
      & \ref{ppf:schrage}
      \\
      & \sysMa{}
      & allowed
      & see \cref{sec:model}
      & \ref{ppf:scully}
      \\
      \midrule
      By class
      & M/G/1
      & allowed
      & unknown
      & \ref{ppf:sevcik_unknown}, \ref{ppf:gittins}
      \\
      & M/G/1+fbk\makebox[0pt][l]{\tnotex{tnote:feedback}}
      & not allowed
      & unknown
      & \ref{ppf:klimov}, \ref{ppf:achievable_region}
      \\
      & M/M/1+fbk\makebox[0pt][l]{\tnotex{tnote:feedback}}
      & allowed
      & unknown
      & \ref{ppf:lai}, \ref{ppf:achievable_region}, \ref{ppf:whittle}
      \\
      \midrule
      By class and
      & M/G/1
      & not allowed
      & known
      & \ref{ppf:fife}
      \\
      service time\makebox[0pt][l]{\tnotex{tnote:slowdown}}
      & M/G/1
      & allowed
      & known
      & \ref{ppf:sevcik_known}
      \\
      & M/G/1
      & allowed
      & unknown
      & \ref{ppf:von_olivier}
      \\
      \bottomrule
    \end{tabular*}
    \begin{tablenotes}
    \item
      \label{tnote:prior_proofs}
      A list of prior proofs appears in \cref{sec:prior}.
    \item
      \label{tnote:feedback}
      Here ``fbk'' stands for \emph{feedback},
      meaning that whenever a job exits the system,
      it has some probability of being replaced by another job.
    \item
      \label{tnote:slowdown}
      This includes minimizing mean \emph{slowdown},
      in which a job's holding cost is the reciprocal of its service time.
    \end{tablenotes}
  \end{threeparttable}
\end{table}

Given its generality,
it is perhaps unsurprising that the Gittins policy has been discovered several times.
Similarly, there are several proofs of its optimality under varying conditions.
\Cref{tab:prior} summarizes several M/G/1-like settings
in which Gittins has been studied,
and \cref{sec:prior} gives a more detailed overview.
In light of this substantial body of prior work,
the optimality of Gittins for minimizing mean holding cost in the M/G/1
is widely accepted in the literature
\citep{aalto_gittins_2009, aalto_properties_2011, hyytia_minimizing_2012,
  scully_soap_2018, scully_optimal_2018, scully_characterizing_2020}.

We ourselves are researchers
whose work often cites Gittins's optimality in the M/G/1.
However, in reviewing the literature,
we found that there is
\emph{no complete proof of Gittins's optimality in its full generality}.
This is in part because Gittins was originally developed not for the M/G/1
but for the Markovian multi-armed bandit problem
\citep{gittins_bandit_1979}.
There are elegant arguments for Gittins's optimality in the multi-armed bandit problem,
but they do not easily translate to the M/G/1.
Results for the M/G/1 thus suffer from a variety of limitations
(\cref{sec:prior}),
so the extent of Gittins's optimality in the M/G/1 is not entirely clear.

In this work, we give
\emph{a unifying presentation of the Gittins policy} in M/G/1-like systems,
resulting in the most general definition of Gittins (\cref{def:index})
and optimality theorem (\cref{thm:optimality}) to date.
Our approach deals directly with the M/G/1,
avoiding the difficulties of translating from the multi-armed bandit problem.
As a result, we actually \emph{extend the known scope} of Gittins's optimality,
such as including systems with batch arrivals.
We make the following contributions:
\begin{itemize}
\item
  We discuss many prior proofs of Gittins's optimality,
  detailing the limitations of each one
  (\cref{sec:prior}).
\item
  We give a new general definition of the Gittins policy
  (\cref{sec:gittins}).
  This involves introducing a new generalization of the M/G/1
  called the \emph{\sys{} queue}
  (\cref{sec:model}).
\item
  We state (\cref{sec:optimality_scope})
  and prove (\cref{sec:game, sec:optimality_proof})
  Gittins's optimality in the \sys{}.
\end{itemize}

\section{History of the Gittins Policy in the M/G/1}
\label{sec:prior}

In this section we review prior work on the Gittins policy
in M/G/1-like queues.
This includes work on special cases of Gittins,
such as SRPT in the case of known service times,
that are not typically thought of as instances of the Gittins policy.
Unfortunately,
every prior proof of Gittins's optimality is limited in some way.
Most limitations are one of the following:
\begin{enumerate}[(i)]
\item
  \label{limitation:finite}
  \emph{Job finiteness.}
  Most proofs assume some type of ``finiteness'' of the job model.
  This manifests as one of
  \begin{enumerate}[({i}-a)]
      \item
        \label{limitation:upper_bound}
        all service times being less than some finite bound,
      \item
        \label{limitation:finite_support}
        service time distributions being discrete with finitely many support points, or
      \item
        \label{limitation:finite_classes}
        finitely many job classes.
  \end{enumerate}
\item
  \emph{Simple job model or metric.}
  Some proof techniques that work for simple job models
  do not readily generalize.
  This includes models with
  \begin{enumerate}[({ii}-a)]
      \item
        \label{limitation:known}
        known service times,
      \item
        \label{limitation:exponential}
        unknown, exponentially distributed service times, or
      \item
        \label{limitation:nonpreemptive}
        unknown, generally distributed service times with nonpreemptive service.
  \end{enumerate}
\item
  \label{limitation:index_only}
  \emph{Only considers index policies.}
  Some proofs only show that Gittins is an optimal \emph{index policy},
  as opposed to optimal among all policies.
  An index policy is one that, like Gittins,
  assigns each job an index based on the job's state
  and always serves the job of maximum index.
\end{enumerate}
\iftoggle{long}{%
These limitations are significant because
they put some widely-believed results
on uncertain theoretical foundations.

A final limitation that applies to all prior proofs is that
although different jobs may have different holding costs,
\emph{each job's holding cost is constant.}
To the best of our knowledge,
ours is the first presentation of Gittins and proof of its optimality
that allows jobs' holding costs to change during service.}{}

\iftoggle{long}{%
  We now present prior work on the Gittins policy in rough chronological order,
  giving each decade a theme.
  The decades should be understood loosely,
  as the themes do not fit perfectly into decades.
  Throughout, when we refer to ``M/G/1 scheduling'',
  we mean the problem of minimizing mean holding cost in an M/G/1 queue or similar model.%
}{%
  We now present prior work on the Gittins policy in rough chronological order.
  Due to space limitations, we only briefly summarize most prior proofs.
}

\iftoggle{long}{%
  \subsection{1960s: Known Service Times}
}{%
  \subsection{1960s and 1970s: Initial Developments in Queueing}
}
\label{sec:prior:known}

\iftoggle{long}{%
The earliest results in M/G/1 scheduling
all featured known service times.
The most famous of these results is
the proof that SRPT minimizes mean response time \citep{schrage_proof_1968},
but researchers also made progress in systems
with variable holding costs and nonpreemptive service.}{}

\begin{priorproof}{\Citet{schrage_proof_1968}}
  \label{ppf:schrage}
\item[Model:]
  Preemptive single-server queue, known service times.
\item[Holding costs:]
  Same for all jobs.
\item[Limitations:]
  \cref{limitation:known}.
\end{priorproof}

\begin{priorproof}{\Citet[Section~4]{fife_scheduling_1965}}
  \label{ppf:fife}
\item[Model:]
  Nonpreemptive M/G/1, known service times.
\item[Holding costs:]
  Based on class and service time.
\item[Limitations:]
  \cref{limitation:finite_support, limitation:finite_classes,
    limitation:known, limitation:nonpreemptive}
\end{priorproof}

\begin{priorproof}{\Citet[Theorem~4-1]{sevcik_use_1971}}
  \label{ppf:sevcik_known}
\item[Model:]
  Preemptive M/G/1, known service times.
\item[Holding costs:]
  Based on class and service time.
\item[Limitations:]
  \cref{limitation:known, limitation:index_only}.
  \Citet[Conjecture~4-1]{sevcik_use_1971} argues informally
  that an index policy should be optimal.
\end{priorproof}

\iftoggle{long}{%
  \subsection{1970s: Unknown Service Times}
}{}
\label{sec:prior:unknown}

\iftoggle{long}{%
The next developments in M/G/1 scheduling
considered systems with unknown service times.
Handling the nonpreemptive M/G/1 is
not much harder than the known-service-time case
\citep[Section~5]{fife_scheduling_1965}.
Preemptive M/G/1 scheduling turns out to be a much harder problem.
\Citet{sevcik_use_1971} and \citet{olivier_kostenminimale_1972}
independently published solutions within a short time span,
though both proofs suffer from similar technical limitations.}{}

\begin{priorproof}{\Citet[Theorem~4-2]{sevcik_use_1971}}
  \label{ppf:sevcik_unknown}
\item[Model:]
  Preemptive M/G/1, unknown service times.
\item[Holding costs:]
  Based on class.
\item[Limitations:]
  \cref{limitation:finite_support, limitation:finite_classes, limitation:index_only}.
  \Citet[Conjecture~4-3]{sevcik_use_1971} argues informally
  that an index policy should be optimal.
\end{priorproof}

\begin{priorproof}{\Citet{olivier_kostenminimale_1972}}
  \label{ppf:von_olivier}
\item[Model:]
  Preemptive M/G/1, unknown service times.
\item[Holding costs:]
  Based on class and service time.
\item[Limitations:]
  \cref{limitation:finite_support, limitation:finite_classes, limitation:index_only}.
\end{priorproof}

One unique aspect of the \citet{olivier_kostenminimale_1972} result
deserves highlighting:
jobs' holding costs can depend on their unknown service times.
This allows minimizing metrics like mean slowdown
even when service times are unknown.
However, this result is not widely known in the queueing theory community,
perhaps in part because it has only been published in German.

\iftoggle{long}{%
  A partially preemptive M/G/1 problem was solved
  by \citet{klimov_time-sharing_1974},
  who studied a nonpreemptive M/G/1 with \emph{feedback},
  denoted M/G/1+fbk.
  In systems with feedback, whenever a job exits the system,
  it has some probability of immediately returning as another job,
  possibly of a different class.
  This model is partially preemptive in that a job returned to the system via feedback
  need not be served immediately.
  Another way of viewing systems with feedback
  is that each job is a discrete semi-Markov chain
  where each job class is a state.\footnote{%
    The word ``state'' here differs from our job model's terminology
    (\cref{sec:model:job}).
    In our terminology, each state in a semi-Markov chain
    corresponds to a connected set of states
    in a piecewise-deterministic Markov process.}
  \Citeauthor{klimov_time-sharing_1974}'s model is thus notable in that
  in addition to having unknown service times,
  jobs take stochastic paths through a state space.}{
  \citet{klimov_time-sharing_1974}
  studied a nonpreemptive M/G/1 with \emph{feedback},
  denoted M/G/1+fbk.
  In systems with feedback, whenever a job exits the system,
  it has some probability of immediately returning as another job,
  possibly of a different class.
}

\begin{priorproof}{\Citet{klimov_time-sharing_1974}}
  \label{ppf:klimov}
\item[Model:]
  Nonpreemptive M/G/1+fbk, unknown service times.
\item[Holding costs:]
  Based on class.
\item[Limitations:]
  \cref{limitation:finite_classes, limitation:nonpreemptive}.
\end{priorproof}

\iftoggle{long}{%
  \subsection{1980s: Connection to the Multi-Armed Bandit Problem}
}{
  \subsection{1980s and 1990s: Connection to Multi-Armed Bandits}
}
\label{sec:prior:bandit}

\iftoggle{long}{%
In parallel with the above developments in M/G/1 scheduling queues,
researchers were studying a seemingly very different stochastic control problem:
the \emph{multi-armed bandit} (MAB) problem.
The MAB was the setting in which the Gittins policy was first developed.
See \citet{gittins_bandit_1979} for a survey of early developments
and \citet{gittins_multi-armed_2011} for a modern overview.

At first glance, the MAB problem seems very different from M/G/1 scheduling.
\begin{itemize}
\item
  The MAB problem involves maximizing \emph{exponentially discounted} reward,
  whereas M/G/1 scheduling typically involve minimizing
  \emph{long-run average} costs.
\item
  The MAB problem involves a \emph{fixed set} of alternatives,
  whereas M/G/1 scheduling features a \emph{dynamically changing set} of jobs.
\end{itemize}
Nevertheless, as early as \citeyear{nash_optimal_1973},
\citet{nash_optimal_1973} showed that
a version of the MAB problem becomes M/G/1 scheduling
in the limit as the discount rate vanishes.
In the 1980s, several researchers further pursued in these ideas.
\Citet{lai_open_1988} reexamined work by \citet{klimov_time-sharing_1974}
on the nonpreemptive M/G/1+fbk, connected it to the MAB problem,
and extended it to the preemptive M/M/1+fbk.
\Citet{gittins_multi-armed_1989} extended work by \citet{nash_optimal_1973}
to continuous time.}{}

\begin{priorproof}{\Citet{lai_open_1988}}
  \label{ppf:lai}
\item[Model:]
  Preemptive M/M/1+fbk, unknown service times.
\item[Holding costs:]
  Based on class.
\item[Limitations:]
  \cref{limitation:finite_classes, limitation:exponential}.
\end{priorproof}

\begin{priorproof}{\Citet[Theorem~5.6]{gittins_multi-armed_1989}}
  \label{ppf:gittins}
\item[Model:]
  Preemptive M/G/1, unknown service times.
\item[Holding costs:]
  Based on class.
\item[Limitations:]
  \cref{limitation:upper_bound, limitation:finite_classes}.
\end{priorproof}

\Citeauthor{gittins_multi-armed_1989}'s result \citep{gittins_multi-armed_1989}
is often cited in the literature as proving
the Gittins policy's optimality in the M/G/1
\citep{aalto_gittins_2009, aalto_properties_2011, hyytia_minimizing_2012,
  scully_soap_2018, scully_optimal_2018, scully_characterizing_2020}.
As such, it deserves some more detailed discussion.

\Cref{ppf:gittins} has two main steps.
The first step simplifies the problem by assuming
the scheduler can only preempt jobs
in a discrete set of states\footnote{%
  In this setting, a job's state is the pair of
  its class and attained service.}
\citep[Theorem~3.28]{gittins_multi-armed_1989}.
The set can be countable in principle,
but the proof assumes a side condition
that is only guaranteed to hold if the set is finite.
\iftoggle{long}{%
  This side condition comes from translating a MAB result to the M/G/1,
  which involves taking the vanishing-discount limit.%
}{%
  The condition comes from translating
  multi-armed bandit results to the M/G/1.%
}

The second step uses a limit argument to allow unrestricted preemption
\citep[Theorem~5.6]{gittins_multi-armed_1989}.
However, because the first step is limited to finitely many job states,
the second step's result is also limited.
Specifically, it requires finitely many classes
and that all service times be less than some finite bound.
\iftoggle{long}{%
These limitations could be relaxed,
but only by checking the first step's side condition
for every system considered in the limit argument.}{}

\iftoggle{long}{%
  \subsection{1990s: Achievable Region Approach}
}{}
\label{sec:prior:achievable_region}

\iftoggle{long}{%
The \emph{achievable region approach}
was a new way of thinking about a wide variety of stochastic control problems,
including M/G/1 scheduling and the MAB problem.
\Citet{bertsimas_achievable_1995} and \citet{dacre_achievable_1999}
give surveys of the area.
While the achievable region method introduced important new ideas,
it did not extend the known scope of Gittins's optimality in M/G/1 scheduling.}{}

\begin{priorproof}{%
    Achievable region approaches\iftoggle{long}{
      \citep{bertsimas_achievable_1995, dacre_achievable_1999}%
    }{.
      See \citet{bertsimas_achievable_1995, dacre_achievable_1999},
      and references therein%
    }}
  \label{ppf:achievable_region}
\item[Model:]
  Preemptive M/M/1+fbk or nonpreemptive M/G/1+fbk,
  unknown service times.
\item[Holding costs:]
  Based on class.
\item[Limitations:]
  \cref{limitation:finite_classes, limitation:exponential, limitation:nonpreemptive}.
\end{priorproof}

\subsection{2000s and 2010s: Analyzing Gittins and Its Performance}
\label{sec:prior:analysis}

The 2000s and 2010s did not, for the most part,
see new proofs of Gittins's optimality.
Researchers instead studied properties of the Gittins policy
\citep{aalto_gittins_2009, aalto_properties_2011}
and analyzed its performance
\citep{whittle_tax_2005, hyytia_minimizing_2012, scully_soap_2018,
  scully_optimal_2018, scully_characterizing_2020}.
A performance analysis by \citet{whittle_tax_2005} based on dynamic programming
also resulted in an optimality proof,
but it did not expand the known scope of Gittins's optimality.

\begin{priorproof}{\Citet{whittle_tax_2005}}
  \label{ppf:whittle}
\item[Model:]
  Preemptive M/M/1+fbk, unknown service times.
\item[Holding costs:]
  Based on class.
\item[Limitations:]
  \cref{limitation:finite_classes, limitation:exponential}.
\end{priorproof}

\subsection{2020: Modeling Jobs as General Markov Processes}
\label{sec:prior:markov_process}

\iftoggle{long}{%
In 2020, \citet{scully_gittins_2020} studied minimizing mean response time
in the preemptive M/G/$k$,
showing that Gittins is near-optimal in a certain sense.
As a byproduct of their analysis of Gittins in the M/G/$k$,
they gave a new proof of Gittins's optimality in the M/G/1.
Their technique overcomes many limitations of prior proofs,
particularly limitation~\cref{limitation:finite},
but it applies only to the metric of mean response time.}{}

\begin{priorproof}{\Citet[Theorem~7.3]{scully_gittins_2020}}
  \label{ppf:scully}
\item[Model:]
  Preemptive \sysMa{},
  i.e. the preemptive \sys{} (\cref{sec:model}) without batch arrivals.
\item[Holding costs:]
  Same for all jobs.
\item[Limitations:]
  Assumes equal holding costs
  and that jobs are preemptible in any state.
\end{priorproof}

Our work can be seen as a significant extension of \cref{ppf:scully}.
Specific aspects we address that \citet{scully_gittins_2020} do not
include varying holding costs,
nonpreemptible or partially preemptible jobs,
and batch arrivals.

\section{System Model: the \sys{} Queue}
\label{sec:model}

We study scheduling in a generalization of the M/G/1 queue
to minimize a variety of mean holding cost metrics.
The average job arrival rate is~$\lambda$,
the service time distribution is~$S$,
and the load is $\rho = \lambda \E{S}$.
We assume $\rho < 1$ for stability.

We call our model the \emph{\sys{} queue}.
The ``M\textsubscript{B}'' indicates that jobs arrive in batches
with Poisson arrival times.
The ``G\textsubscript{MP}'' indicates generally distributed service times,
with each job's service time arising from an underlying Markov process.

The main feature of the \sys{} is that it models jobs as Markov processes.
The key intuition is:
\begin{quote}
  A job's \emph{state} encodes
  all information the scheduler knows about the job.
\end{quote}
This means that the job Markov process differs
depending on what information the scheduler knows.
For example, to model the perfect-information case
where the scheduler is told every job's service time when it arrives,
a job's state might be its remaining service time,
and the Markov process dynamics would be deterministic
(\cref{ex:known_service_time}).
On the other extreme,
if the scheduler knows nothing other than the overall service time distribution~$S$,
then a job's state might be the amount of service it has received so far,
and the Markov process dynamics would be stochastic
(\cref{ex:unknown_service_time}).
The \sys{} thus encompasses a wide variety of M/G/1-like queues.

This section explains the \sys{} queue in more detail.
The model's main feature is that the information the scheduler knows about a job
may change as the job receives service (\cref{sec:model:job}).
A job's preemptibility (\cref{sec:model:preempt})
and holding cost (\cref{sec:model:hold})
may also change during its service.

\subsection{Markov-Process Jobs}
\label{sec:model:job}

We model jobs as \emph{absorbing continuous-time strong Markov processes}.
The state of a job encodes all information that the scheduler knows about the job.
Without loss of generality,
we assume all jobs share a common state space~$\X$
and follow the same stochastic Markovian dynamics.
However, the realization of the dynamics may be different for each job.
In particular, the \emph{initial state} of each job
is drawn from a distribution~$\Xnew$,
so different jobs may start in different states.

While a job is in service,
its state stochastically advances according to the Markovian dynamics.
This evolution is independent of the arrival process
and the evolution of other jobs.
A job's state does not change while waiting in the queue.

In addition to the main job state space~$\X$,
there is one additional \emph{final state}, denoted~$\xdone$.
When a job enters state~$\xdone$, it completes and exits the system.
One can think of a service time~$S$
as the stochastic amount of time it takes for a job to go from
its initial state, which is drawn from~$\Xnew$, to the final state~$\xdone$.
Because we assume $\E{S} < \infty$,
every job eventually reaches~$\xdone$ with probability~$1$.
For ease of notation, we follow the convention that $\xdone \not\in \X$.

\begin{example}
  \label{ex:known_service_time}
  To model known service times,
  let a job's state be its remaining service time.
  The state space is $\X = (0, \infty)$,
  the initial state distribution~$\Xnew$ is the service time distribution~$S$,
  and the final state is $\xdone = 0$.
  During service, a job's state decreases at rate~$1$.
\end{example}

\begin{example}
  \label{ex:unknown_service_time}
  To model unknown service times,
  let a job's state be its attained service,
  meaning the amount of time it has been served so far.
  The state space is $\X = [0, \infty)$,
  all jobs start in initial state $\Xnew = 0$,
  and the final state $\xdone$ is an isolated point.
  During service, a job's state increases at rate~$1$,
  but it also has a chance to jump to~$\xdone$.
  The jump probability depends on the service time distribution~$S$:
  the probability a job jumps while being served
  from state~$x$ to state $y > x$ is $\P{S \leq y \given S > x}$.
\end{example}

\subsection{Preemptible and Nonpreemptible States}
\label{sec:model:preempt}

Every job state is either preemptible or nonpreemptible.
The job in service can only be preempted if it is in a preemptible state.
We write $\X_\preempt$ for the set of preemptible states
and $\X_\nonpreempt = \X \setminus \X_\preempt$
for the set of nonpreemptible states.
Naturally, we assume the scheduler knows which states are preemptible.

We assume all jobs start in a preemptible state,
i.e. $\Xnew \in \X_\preempt$ with probability~$1$.
This means that all jobs in the queue are in preemptible states,
and only the job in service can be in a nonpreemptible state.

We assume preemption occurs with no cost or delay.
Because a job's state only changes during service,
our model is preempt-resume, meaning that preemption does not cause loss of work.

\subsection{Batch Poisson Arrival Process}
\label{sec:model:arrival}

In the \sys{}, jobs arrive in batches.
We represent a batch as a list of states,
where the $i$th state is the initial state of the $i$th job in the batch.
The batch vector has distribution $\Xbatch = (\Xbatch[1], \dots, \Xbatch[B])$,
where $B$ is the distribution of the number of jobs per batch.
The batch arrival times are a Poisson process of rate $\lambda/\E{B}$,
with each batch drawn independently from~$\Xbatch$.
The initial state distribution $\Xnew$ is an aggregate distribution
determined by picking a random element from a length-biased sample of~$\Xbatch$.

We allow $\Xbatch$ to be an arbitrary distribution
over lists of preemptible states.
That is, the starting states of the jobs within a batch
can be correlated with each other or with the size of a batch.
However, after arrival, jobs' states evolve independently of each other
(\cref{sec:model:job}).

Our \sys{} model differs from the traditional M/G/1 with batch Poisson arrivals,
often denoted M\textsuperscript{X}/G/1, in an important way.
In the M\textsuperscript{X}/G/1,
service times within a batch are drawn i.i.d. from~$S$.
The \sys{} is more general in that
starting states within a batch can be correlated,
so service times within a batch can also be correlated.

\subsection{System State}
\label{sec:model:system}

The state of the system can be described by a list $(x_1, \dots, x_n)$.
Here $n$ is the number of jobs in the system,
and $x_i \in \X$ is the state of the $i$th job.
We denote the equilibrium distribution of the system state as $(X_1, \dots, X_N)$,
where $N$ is the equilibrium distribution of the number of jobs.

When discussing the equilibrium distribution of quantities
under multiple scheduling policies,
we use a superscript~$\pi$, as in~$N^\pi$,
to refer to the distribution under scheduling policy~$\pi$.

\subsection{Holding Costs and Objective}
\label{sec:model:hold}

We assume that there each job incurs a cost for each unit of time it is not complete.
Such a cost is called a \emph{holding cost}, and it applies to every job.
A job's holding cost depends on its state, so it may change during service.
We denote the holding cost of state $x \in \X$ by~$\hold[x]$.
Holding costs have dimension $\unit{cost}/\unit{time}$.
We assume that holding costs are deterministic, positive,\footnote{%
  The holding cost of nonpreemptible states
  does not impact minimizing mean holding cost
  (\cref{thm:hold_nonpreempt}),
  so one could have $\hold[x] \leq 0$ for $x \in \X_\nonpreempt$.}
and known to the scheduler.
For ease of notation, we also define $\hold[\xdone] = 0$.

Let $H = \sum_{i = 1}^N \hold[X_i]$ be the equilibrium distribution of
the total holding cost of all jobs in the system.
Our objective is to schedule to \emph{minimize mean holding cost~$\E{H}$}.

\subsection{What Does the Scheduler Know?}
\label{sec:model:known_to_scheduler}

The scheduler also knows, at every moment in time,
the current state of all jobs in the system.
This assumption is natural because the intuition of our model is that
a job's state encodes everything the scheduler knows about the job.

We assume the scheduler knows a description of the job model:
the state space~$\X$,
the subset of preemptible states $\X_\preempt \subseteq \X$,
and the Markovian dynamics that govern how a job's state evolves.
This assumption is necessary for the Gittins policy,
as the policy's definition depends on the job model.

Finally, we assume that the scheduler knows the holding cost $\hold[x]$
of each state $x \in \X$.
However, it is possible to transform some problems with unknown holding costs
into problems with known holding costs.
A notable example is minimizing mean slowdown when service times are unknown to the scheduler
(\cref{ex:slowdown}).
After transforming such problems into known-holding-cost form,
one can apply our results.

\subsection{Technical Foundations}
\label{sec:model:technical}

We have thus far avoided discussing
technical measurability conditions that the job model must satisfy.
For example, if the job Markov process has uncountable state space~$\X$,
one should make some topological assumptions on $\X$ and~$\X_\preempt$,
as well as some continuity assumptions on holding costs.
As another example, when discussing subsets $\Y \subseteq \X_\preempt$
(\cref{def:index, def:game}),
one should restrict attention to measurable subsets.
See \citet[Appendix~D]{scully_gittins_2020} for additional discussion.

We consider these technicalities outside the scope of this paper.
All of our results are predicated on being able
to apply basic optimal stopping theory
to solve the Gittins game (\cref{sec:game}).
Optimal stopping of general Markov processes is a broad field,
and the theory has been developed under many different types of assumptions
\citep{peskir_optimal_2006}.
Our main result (\cref{thm:optimality})
can be understood as proving Gittins's optimality
in any setting where optimal stopping theory of the Gittins game has been developed.

\section{The Gittins Policy}
\label{sec:gittins}

We now define the Gittins policy,
the scheduling policy that minimizes mean holding cost
in the \sys{} (\cref{sec:model}).

Before defining Gittins, we discuss its intuitive motivation.
Suppose we are scheduling with the goal of minimizing mean holding cost.
How do we decide which job to serve?
Because our objective is minimizing mean holding cost,
our aim should be to quickly lower the holding cost of jobs in the system.
We can lower a job's holding cost by completing it,
in which case its holding cost becomes $\hold[\xdone] = 0$,
or by serving it until it reaches a state with lower holding cost.

The basic idea of Gittins is to
always serve the job whose holding cost we can decrease the fastest.
To formalize this description, we need to define what it means
for a job's holding cost to decrease at a certain rate.

\subsection{Gittins Index}
\label{sec:gittins:index}

As a warm-up,
consider the setting of \cref{ex:known_service_time}:
the scheduler knows every job's service time,
and a job's state is its remaining service time.
Suppose that every state is preemptible.

How quickly can we decrease the holding cost of a job in state~$x$,
meaning $x$ remaining service time?
Serving a job from state~$x$ to state~$y$ takes $x - y$ time
and decreases the job's holding cost by $\hold[x] - \hold[y]$,
\iftoggle{long}{%
  which means
  \begin{equation*}
    \gp*{\mathpar{9em}{holding cost decrease rate from $x$ to~$y$}}
    = \frac{\hold[x] - \hold[y]}{x - y}.
  \end{equation*}%
}{%
  so the holding cost decreases at rate $(\hold[x] - \hold[y])/(x - y)$.%
}
To find the fastest possible decrease, we optimize over~$y$:
\begin{equation*}
  \gp*{\mathpar{9.5em}{maximum holding cost decrease rate from~$x$}}
  = \sup_{y \in [0, x)} \frac{\hold[x] - \hold[y]}{x - y}.
\end{equation*}
The above quantity is called the \emph{(Gittins) index} of state~$x$.
A state's index is the maximum rate at which we can decrease its holding cost
by serving it for some amount of time.

To generalize the above discussion to general job models,
we need to make two changes.
Firstly, because a job's state dynamics can be stochastic,
we need to consider serving it until it enters a \emph{set} of states~$\Y$.
Secondly, because we cannot stop serving a job while it is nonpreemptible,
we require $\Y \subseteq \X_\preempt$.

\begin{definition}
  \label{def:serve_hold}
  For all $x \in \X$ and $\Y \subseteq \X_\preempt$, let
  \begin{align*}
    \Serve[x, \Y] &= \gp*{\mathpar{14em}{%
      service needed for a job starting in state~$x$ to first enter $\Y \cup \{\xdone\}$}}, \\
    \serve[x, \Y] &= \E{\Serve[x, \Y]}, \\
    \Hold[x, \Y] &= \gp*{\mathpar{15.75em}{%
      holding cost of a job starting in state~$x$ when it first enters $\Y \cup \{\xdone\}$}}, \\
    \hold[x, \Y] &= \E{\Hold[x, \Y]}.
  \end{align*}
  To clarify, $\Serve[x, \Y]$ and $\Hold[x, \Y]$ are distributions.
  If $x \in \Y$, then $\Serve[x, \Y] = 0$ and $\Hold[x, \Y] = \hold[x]$.
\end{definition}

If we serve a job from state~$x$ until it enters~$\Y$,
its holding cost decreases at rate $(\hold[x] - \hold[x, \Y])/\serve[x, \Y]$
on average.
We obtain a state's Gittins index by optimizing over~$\Y$.

\begin{definition}
  \label{def:index}
  The \emph{(Gittins) index} of state $x \in \X$ is
  \begin{equation*}
    \ix = \sup_{\Y \subseteq \X_\preempt} \frac{\hold[x] - \hold[x, \Y]}{\serve[x, \Y]}.
  \end{equation*}
  When we say that a job has a certain index,
  we mean that the job's current state has that index.
\end{definition}

Given the definition of the Gittins index,
the Gittins policy boils down to one rule:
\iftoggle{long}{\begin{quote}
  A}{a}t every moment in time, unless the job in service is nonpreemptible,
  serve the job of \emph{maximal Gittins index},
  breaking ties arbitrarily.
\iftoggle{long}{\end{quote}}{}%

Because the Gittins index depends on the job model,
it might be more accurate to view Gittins not as one specific policy
but rather as a family of policies,
with one instance for every job model.
When we refer to ``the'' Gittins policy,
we mean the Gittins policy for the current system's job model.

\iftoggle{long}{%
\begin{example}[Gittins for mean response time]
  \label{ex:response_time}
  Consider the system from \cref{ex:unknown_service_time}.
  It has unknown service times,
  and a job's state~$x$ is its attained service.
  Suppose all states are preemptible.
  To minimize mean response time, we give all jobs holding cost~$1$.
  The Gittins index (\cref{def:index}) is then given by
  a formula well-known in the literature
  \citep{aalto_gittins_2009, aalto_properties_2011, scully_soap_2018}:
  \begin{equation*}
    \ix = \sup_{y > x} \frac{\P{S \leq y \given S > x}}{\E{\min\{S, y\} - x \given S > x}},
  \end{equation*}
\end{example}}{}

\subsection{Gittins Rank}
\label{sec:gittins:rank}

Some work on the Gittins policy
refers to the \emph{(Gittins) rank} of a state
\citep{sevcik_use_1971, sevcik_scheduling_1974, scully_soap_2018, scully_gittins_2020},
which is the reciprocal of its index:
\begin{equation*}
  \rank = \frac{1}{\ix}.
\end{equation*}
Gittins thus always serves the job of minimal rank.

The Gittins rank sometimes has a more intuitive interpretation than the Gittins index.
For instance, when jobs have known service times and constant holding cost~$1$,
a job's rank is its remaining service time,
and thus Gittins reduces to SRPT.

We use both the index and rank conventions in this work.
This section mostly uses the index convention.
\Cref{sec:game, sec:optimality_proof}, which prove Gittins's optimality,
use the rank convention because it better matches the authors' intuitions,
though this choice is certainly subjective.

\section{Scope of Gittins's Optimality}
\label{sec:optimality_scope}

Our main result is that Gittins is optimal in the \sys{}
with arbitrary state-based holding costs.
Specifically, Gittins is optimal among \emph{nonclairvoyant} scheduling policies,
which are policies that make scheduling decisions
based only on the current and past system states.

\begin{theorem}
  \label{thm:optimality}
  The Gittins policy minimizes mean holding cost in the \sys{}.
  That is, for all nonclairvoyant policies~$\pi$,
  \begin{equation*}
    \E{H^{\mathrm{Gittins}}} \leq \E{H^\pi}.
  \end{equation*}
\end{theorem}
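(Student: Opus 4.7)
My plan is to work in the rank convention of \cref{sec:gittins:rank} and reduce \cref{thm:optimality} to a rank-by-rank minimization, using the Gittins game promised in \cref{sec:game} as the central accounting device. For each rank threshold $r > 0$, the game yields a value function $V_r$ on states and an optimal stopping region $\Yopt[r]$, and its Bellman relation equates the expected rate of holding-cost decrease along a Gittins-served trajectory to the threshold $r$. That relation is exactly the structure needed for a potential-function argument comparing arbitrary policies to Gittins.

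First, I would establish a layer-cake decomposition $\hold[x] = \int_0^\infty f(x, r)\,dr$ for an appropriate density $f$ read off from the game's value function. Plugging this into $H^\pi = \sum_i \hold[X_i]$ and taking expectations yields $\E{H^\pi} = \int_0^\infty \E{N^\pi(r)}\,dr$, where $N^\pi(r)$ is a rank-indexed sum over jobs currently in the system under policy $\pi$. The theorem then reduces to showing $\E{N^{\mathrm{Gittins}}(r)} \leq \E{N^\pi(r)}$ for every $r > 0$ and every nonclairvoyant $\pi$.

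Next, I would prove this per-rank inequality by a potential-function argument. Let $\Phi^\pi_r(t) = \sum_i V_r(X_i(t))$, summed over jobs present at time $t$ under $\pi$. The game's Bellman relation implies that serving a job whose current rank exceeds $r$ drains $\Phi^\pi_r$ at the optimal rate, while serving a rank-${\leq}r$ job contributes at most zero expected drift; batch arrivals contribute jumps whose distribution depends only on $\Xbatch$ and not on $\pi$. Equating the total drift of $\Phi^\pi_r$ to zero in stationarity then yields an identity that lower-bounds $\E{N^\pi(r)}$ by a function of the rate at which $\pi$ delivers service to rank-${>}r$ jobs. Since Gittins always serves the minimum-rank (maximum-index) job present, it maximizes the rate of service to rank-${>}r$ jobs whenever any exist, and so attains the lower bound.

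The main obstacle will be making this drift argument rigorous in three respects. First, nonpreemptible states force service continuation, so the game and $V_r$ must be extended to absorb forced-continuation segments without disrupting the Bellman relation; this is where the setting strictly generalizes \cref{ppf:scully}. Second, batch arrivals with correlated initial states $\Xbatch$ produce policy-independent jumps in $\Phi^\pi_r$ that must be handled carefully when passing to stationary expectations. Third, the technical measurability and optimal-stopping conditions deferred in \cref{sec:model:technical} must be invoked to ensure $V_r$ and the relevant stopping times are well-defined. I expect nonpreemptibility to be the most delicate step, since the interaction between forced continuation and the potential function is where the argument most departs from prior proofs.
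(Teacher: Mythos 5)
The first half of your plan is essentially the paper's own: the layer-cake density you propose to read off the game's value function is $\serve/r^2$, obtained from the envelope-theorem derivative of $\game$ (\cref{thm:game_derivative}), which gives exactly \cref{thm:r-work_hold}; the nonpreemptible contributions to holding cost and to \r-work are then policy-independent by a generalized Little's law (\cref{thm:hold_nonpreempt, thm:r-work_nonpreempt}), so everything reduces to minimizing mean \r-work for each $r$ (\cref{thm:r-work_min}). The problems are in your per-$r$ step. A minor symptom first: the give-up region is $\Yopt = \{x \in \X_\preempt : \rank \geq r\}$, so the \emph{continuation} region---where the Bellman relation forces the value to drain at expected rate exactly $1$ under service---consists of the \emph{low}-rank (\r-good) jobs; Gittins maximizes the rate of service to those, not to rank-${>}r$ jobs as you assert, and for a job in the give-up region the variational inequality is one-sided (drift at least $-1$), not ``at most zero drift.''

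The substantive gap is the step from drift rates to stationary levels. Equating the stationary drift of the linear potential $\Phi^\pi_r = \sum_i V_r(X_i)$ to zero yields a \emph{rate} identity: the policy-independent input rate from batches equals the expected drain rate, which is at most the busy probability. That constrains utilization-type quantities but does not lower-bound the stationary level $\E{W_\preempt^\pi(r)}$ (your $\E{N^\pi(r)}$); to control a level you need a functional whose drift contains the level itself (a second-order argument), a coupling, or a decomposition theorem. This is precisely the difficulty the paper flags: ``Gittins drains \r-work as fast as possible'' is not sufficient, because \r-work also jumps upward whenever a served job is \r-recycled (its state exits $\Yopt$), and the policy controls \emph{when} these recyclings happen relative to the current \r-work level. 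The paper closes the argument by an interchange reduction to \r-work-conserving policies and then interprets \r-recyclings as vacations in an M/G/1 with generalized vacations, so that Miyazawa's decomposition expresses $\E{W^\pi(r)}$ as a policy-independent constant plus a positive constant times the mean \r-work sampled just before an \r-recycling; Gittins recycles only at zero \r-work, making that term zero. Note that this is where the Poisson batch arrivals are genuinely used; your outline invokes Poisson arrivals only to say the arrival jumps of $\Phi_r$ are policy-independent, which would hold for any renewal batch process, so some ingredient of this kind (vacation/PASTA-type decomposition, a second-order drift identity, or an explicit coupling handling recycling times) is missing and is the heart of the proof, more so than the nonpreemptibility issues you single out.
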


All of the prior optimality results discussed in \cref{sec:prior}
are special cases of \cref{thm:optimality}.
This makes \cref{thm:optimality} a
\emph{unifying theorem for Gittins's optimality} in M/G/1-like systems.
\Cref{thm:optimality} also holds in scenarios not covered by any prior result.
For instance, no prior result handles batch arrivals
or holding costs that change during service.

\subsection{Mean Slowdown and Unknown Holding Costs}
\label{sec:optimality_scope:slowdown}

Recall from \cref{sec:model:hold} that we assume that
the holding cost of every job state is known to the scheduler.
However, some scheduling problems involve unknown holding costs.
An important example is minimizing mean slowdown,
in which a job's holding cost is the reciprocal of its service time.
Unless all service times are known to the scheduler,
this involves unknown holding costs.

Fortunately,
we can transform many problems with unknown holding costs
into problems with known holding costs.
Suppose a job's current unknown holding cost
depends only on its current and future states.
Then for all job states $x \in \X$, let
\begin{equation}
  \label{eq:hold_unknown}
  \hold[x]
  = \E*{\mathpar{9em}{unknown holding cost of a job in state~$x$}
      \given \mathpar{4.75em}{job reached state~$x$}},
\end{equation}
where the expectation is taken over a random realization of
a job's path through the state space.
The mean holding cost of nonclairvoyant policies
is unaffected by this transformation.

\begin{example}[Gittins for mean slowdown]
  \label{ex:slowdown}
  Consider the system from \cref{ex:unknown_service_time}.
  It has unknown service times,
  and a job's state~$x$ is its attained service.
  Suppose all states are preemptible.
  To minimize mean slowdown,
  we give a job with service time~$s$ holding cost~$s^{-1}$.
  This turns \cref{eq:hold_unknown} into
  \iftoggle{long}{%
    \begin{equation*}
      \hold[x] = \E{S^{-1} \given S > x},
    \end{equation*}%
  }{%
    $\hold[x] = \E{S^{-1} \given S > x}$,%
  }
  and the Gittins index\iftoggle{long}{ (\cref{def:index})}{} becomes
  \begin{equation*}
    \ix = \sup_{y > x} \frac{\E{S^{-1} \1(S \leq y) \given S > x}}{\E{\min\{S, y\} - x \given S > x}}.
  \end{equation*}
\end{example}

\section{The Gittins Game}
\label{sec:game}

In this section we introduce the \emph{Gittins game},
which is an optimization problem concerning a single job.
The Gittins game serves two purposes.
Firstly, it gives an alternative intuition for the Gittins rank.
Secondly, its properties are important for proving Gittins's optimality.
We define the Gittins game
(\cref{sec:game:def}),
study its properties,
(\cref{sec:game:shape, sec:game:give-up, sec:game:derivative}),
and explain its relationship to the Gittins rank
(\cref{sec:game:rank}).

\subsection{Defining the Gittins Game}
\label{sec:game:def}

The Gittins game is an optimal stopping problem concerning a single job.
We are given a job in some starting state $x \in \X$
and a \emph{penalty parameter} $r \geq 0$,
which has dimension $\unit{time}^2/\unit{cost}$.

The goal of the Gittins game is to end the game as soon as possible.
The game proceeds as follows.
\iftoggle{long}{\begin{itemize}
\item}{}%
  We begin by serving the job.
  The job's state evolves as usual during service (\cref{sec:model:job}).
  If the job completes, namely by reaching state~$\xdone$, the game ends immediately.
\iftoggle{long}{\item}{}%
  Whenever the job's state is preemptible, we may \emph{give up}.
  If we do so, we stop serving the job,
  and the game ends after deterministic delay $r \hold[y]$,
  where $y \in \X_\preempt$ is the job's state when we give up.
\iftoggle{long}{\end{itemize}}{\par}%
We assume the job's current state is always visible.
Playing the Gittins game thus boils down to deciding whether or not to give up
based on the job's current state.

Because the job's state evolution is Markovian,
the Gittins game is a Markovian optimal stopping problem.
This means there is an optimal policy of the following form:
for some \emph{give-up set} $\Y \subseteq \X_\preempt$,
give up when the job's state first enters~$\Y$.
The strong Markov property implies that this set $\Y$
need not depend on the starting state,
though it may depend on the penalty parameter.
We use this observation and \cref{def:serve_hold}
to formally define the Gittins game.

\begin{definition}
  \label{def:game}
  The \emph{Gittins game} is the following optimization problem.
  The parameters are a starting state $x \in \X$ and penalty parameter~$r$,
  and the control is a give-up set $\Y \subseteq \X_\preempt$.
  The \emph{cost of give-up set~$\Y$} is
  \begin{equation*}
    \game[x, r, \Y] = \serve[x, \Y] + r \hold[x, \Y].
  \end{equation*}
  The objective is to choose $\Y$ to minimize $\game[x, r, \Y]$.
  The \emph{optimal cost} or \emph{cost-to-go function} of the Gittins game is
  \begin{equation}
    \label{eq:game}
    \game = \inf_{\Y \subseteq \X_\preempt} \game[x, r, \Y].
  \end{equation}
\end{definition}





\subsection{Shape of the Cost-To-Go Function}
\label{sec:game:shape}

To gain some intuition for the Gittins game,
we begin by proving some properties of the cost-to-go function,
focusing on its behavior as the penalty parameter varies.

\begin{lemma}
  \label{thm:game_properties}
  For all $x \in \X$ and $r \geq 0$,
  the cost-to-go function $\game$ is
  \begin{enumerate\iftoggle{long}{}{*}}[(i), afterlabel={\nolinebreak}]
  \item
    \label{item:game_nondecreasing}
    nondecreasing in~$r$,
  \item
    \label{item:game_concave}
    concave in~$r$,
  \item
    \label{item:game_bound_hold}
    bounded by $\game \leq \serve[x, \X_\preempt] + r \hold[x, \X_\preempt]$,
  \item
    \label{item:game_bound_serve}
    bounded by $\game \leq \serve[x, \emptyset]$.
  \end{enumerate\iftoggle{long}{}{*}}
  When $x \in \X_\preempt\esub$,
  property~\cref{item:game_bound_hold} becomes $\game \leq r \hold[x]$.
\end{lemma}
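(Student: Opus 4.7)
\medskip

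The plan is to observe that each of the four properties follows almost directly from the definition of $\game$ as the infimum of a family of functions $\game[x, r, \Y] = \serve[x, \Y] + r \hold[x, \Y]$ indexed by give-up sets $\Y \subseteq \X_\preempt$. The key structural fact is that for every fixed $\Y$, the map $r \mapsto \game[x, r, \Y]$ is \emph{affine} in~$r$, with slope $\hold[x, \Y] \geq 0$ (holding costs are nonnegative since $\hold[\cdot] > 0$ on $\X$ and $\hold[\xdone] = 0$). Everything else is essentially bookkeeping.

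For properties \cref{item:game_nondecreasing,item:game_concave}, I would first note that an affine function with nonnegative slope is both nondecreasing and concave, and then invoke the standard fact that the pointwise infimum of a family of nondecreasing (respectively concave) functions is again nondecreasing (respectively concave). Applying this with the family $\{r \mapsto \game[x, r, \Y] \mid \Y \subseteq \X_\preempt\}$ immediately yields monotonicity and concavity of $\game$ in~$r$.

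For property~\cref{item:game_bound_hold}, I would specialize to $\Y = \X_\preempt$ in the infimum~\eqref{eq:game}, which gives $\game \leq \serve[x, \X_\preempt] + r \hold[x, \X_\preempt]$. When in addition $x \in \X_\preempt$, \cref{def:serve_hold} tells us that $\Serve[x, \X_\preempt] = 0$ and $\Hold[x, \X_\preempt] = \hold[x]$, so the bound simplifies to $r \hold[x]$. For property~\cref{item:game_bound_serve}, I would specialize to $\Y = \emptyset$. Then a job starting from~$x$ first enters $\Y \cup \{\xdone\} = \{\xdone\}$ at completion, so $\Hold[x, \emptyset] = \hold[\xdone] = 0$ and hence $\hold[x, \emptyset] = 0$, leaving $\game \leq \game[x, r, \emptyset] = \serve[x, \emptyset]$.

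There is no real obstacle here; the main thing to be careful about is simply verifying the two identities used for the specializations, namely that $\serve[x, \X_\preempt]$ and $\hold[x, \X_\preempt]$ collapse correctly when $x \in \X_\preempt$, and that $\hold[x, \emptyset]$ vanishes because the first-hitting time of $\Y \cup \{\xdone\}$ for $\Y = \emptyset$ is a completion time. Both are direct from \cref{def:serve_hold}, so the entire argument is short.
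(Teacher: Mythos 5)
Your proposal is correct and follows essentially the same route as the paper's proof: properties (i)--(ii) from $\game$ being an infimum of affine (hence nondecreasing, concave) functions of~$r$, and properties (iii)--(iv) by specializing the give-up set to $\X_\preempt$ and $\emptyset$, with the $x \in \X_\preempt$ simplification coming from \cref{def:serve_hold}. The only difference is that you spell out the bookkeeping slightly more explicitly than the paper does.
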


\begin{proof}
  Properties~\cref{item:game_nondecreasing, item:game_concave}
  follow from~\cref{eq:game},
  which expresses $\game$ as an infimum of nondecreasing concave functions of~$r$.
  Properties~\cref{item:game_bound_hold, item:game_bound_serve}
  follow from the fact that two possible give-up sets
  are~$\X_\preempt$, meaning giving up as soon as possible,
  and~$\emptyset$, meaning never giving up.
  The simplification when $x \in \X_\preempt$ is due to \cref{def:serve_hold}.
\end{proof}



\subsection{Optimal Give-Up Set}
\label{sec:game:give-up}

We now characterize one possible solution to the Gittins game.
Because the Gittins game is a Markovian optimal stopping problem,
we never need to look back at past states when deciding when to give up.
This means we can find an optimal give-up set that depends only on
the penalty parameter~$r$.
We ask for each preemptible state:
is it optimal to give up immediately if we start in this state?
The set of states for which we answer yes is an optimal give-up set.

\begin{definition}
  \label{def:Yopt}
  The \emph{optimal give-up set} for the Gittins game with penalty parameter~$r$
  is
  \begin{equation*}
    \Yopt = \curlgp{x \in \X_\preempt \given \game = r \hold[x]}.
  \end{equation*}
  Noe that $\Yopt[0] = \X_\preempt$.
  We also let $\Yopt[\infty] = \emptyset$.
  For simplicity of language,
  we call $\Yopt$ ``the'' optimal give-up set,
  even though there may be other optimal give-up sets.
\end{definition}

Basic results in optimal stopping theory
\citep{peskir_optimal_2006}
imply that
\iftoggle{long}{\begin{equation*}}{\(}
  \game = \game[x, r, \Yopt]
\iftoggle{long}{,\end{equation*}}{\),}
so the infimum in \cref{eq:game} is always attained, namely by~$\Yopt$.

The sets $\Yopt$ are monotonic in~$r$,
i.e. $\Yopt \supseteq \Yopt[r']$ for all $r \leq r'$.
This is because increasing the penalty makes giving up less attractive,
so giving up is optimal in fewer states.

For most of the rest of this paper, when we discuss the Gittins game,
we consider strategies that use optimal give-up sets,
so we simplify the notation for that case.

\begin{definition}
  \label{def:serve_hold_abbrev}
  For all $x \in \X$ and $r \geq 0$, let
  \begin{equation*}
    \Serve = \Serve[x, \Yopt]
  \end{equation*}
  and similarly for $\serve$, $\Hold$, and $\hold$.
\end{definition}

\subsection{Derivative of the Cost-To-Go Function}
\label{sec:game:derivative}

Suppose we solve the Gittins game for penalty parameter~$r$,
then change the penalty parameter to $r \pm \epsilon$
for some small $\epsilon > 0$.
One would expect that the give-up set $\Yopt$
is nearly optimal for the new penalty parameter $r \pm \epsilon$,
which would imply $\game[x, r \pm \epsilon] \approx \serve + (r \pm \epsilon) \hold$.
One can use \cref{thm:game_properties}
and a classic envelope theorem \citep[Theorem~1]{milgrom_envelope_2002}
to formalize this argument.
\iftoggle{long}{}{%
For brevity, we omit the proof.
See \citet[Lemma~5.3]{scully_gittins_2020} for a similar proof.}

\begin{lemma}
  \label{thm:game_derivative}
  For all $x \in \X_\preempt\esub$,
  the function $r \mapsto \game$ is differentiable almost everywhere
  with derivative
  \begin{equation*}
    \dd{r} \game = \hold.
  \end{equation*}
\end{lemma}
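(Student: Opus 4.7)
The plan is to recognize this as a standard envelope-theorem result for a concave function defined as an infimum of affine functions, and to invoke the structure already provided by \cref{thm:game_properties} and \cref{def:Yopt}.

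First I would record two facts: (a) by \cref{thm:game_properties}\cref{item:game_concave}, the map $r \mapsto \game$ is concave, and hence differentiable at all but countably many $r \geq 0$; and (b) for each fixed $r_0 \geq 0$, plugging the give-up set $\Yopt[r_0]$ into \cref{def:game} gives
\begin{equation*}
  \game[x, r] \;\leq\; \serve[x, \Yopt[r_0]] + r \hold[x, \Yopt[r_0]]
  \;=\; \serve[x, r_0] + r \hold[x, r_0]
\end{equation*}
for every $r \geq 0$, using the abbreviated notation of \cref{def:serve_hold_abbrev}. Moreover, by the remark after \cref{def:Yopt} that the infimum in \cref{eq:game} is attained by $\Yopt$, this inequality is an equality at $r = r_0$. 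Thus the affine function $r \mapsto \serve[x, r_0] + r \hold[x, r_0]$ is a supporting line of the concave function $r \mapsto \game$ at $r_0$, so its slope $\hold[x, r_0]$ is a supergradient of $\game$ at $r_0$.

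Next I would invoke the standard fact that at any point where a concave function is differentiable, the supergradient is unique and equals the derivative; this is exactly the form of the Milgrom--Segal envelope theorem \citep[Theorem~1]{milgrom_envelope_2002} suited to this setting. Combining this with the supergradient identification above yields $\dd{r} \game = \hold[x, r]$ at every $r$ where the derivative exists, which is almost every $r \geq 0$.

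I do not expect any real obstacle here; the only subtlety is verifying that \cref{def:game} genuinely expresses $\game$ as an infimum of affine functions of $r$ with the envelope-theorem hypotheses satisfied, which is immediate from the separability of $\game[x, r, \Y] = \serve[x, \Y] + r \hold[x, \Y]$ in $r$. A virtually identical argument appears as \citet[Lemma~5.3]{scully_gittins_2020}, so citing that proof in place of repeating the envelope-theorem manipulations would be appropriate in the short version of the paper.
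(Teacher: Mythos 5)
Your proposal is correct and follows essentially the same route the paper intends: it uses the concavity from \cref{thm:game_properties}, the fact that $\Yopt[r_0]$ gives an affine function of $r$ that touches $\game$ at $r_0$ and lies above it elsewhere, and the Milgrom--Segal envelope theorem (equivalently, uniqueness of the supergradient at points of differentiability), which is exactly the argument the paper sketches and defers to \citet[Lemma~5.3]{scully_gittins_2020}. No gaps; citing that reference in place of the supporting-line manipulations is precisely what the paper does.
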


\iftoggle{long}{%
For brevity, we omit the proof of \cref{thm:game_derivative}.
See \citet[Lemma~5.3]{scully_gittins_2020} for a similar proof.}{}

\subsection{Relationship to the Gittins Rank}
\label{sec:game:rank}

The Gittins game and the optimal give-up set
are closely related to the Gittins rank.
In fact, we can use the Gittins game to give an alternative definition
of a state's rank.\iftoggle{long}{

  \Cref{sec:game:def} describes the goal of the Gittins game
  as being to end the game as quickly as possible.
  An alternative intuition is that the goal is
  to reduce the job's holding cost to zero as quickly as possible.
  Under this intuition, we think of giving up as starting a process
  that decreases the job's holding cost at constant rate~$1/r$,
  where $r$ is the penalty parameter.
  Giving up in preemptible state~$x$ thus takes $r \hold[x]$ time.

  Consider playing the Gittins game
  with starting state $x \in \X_\preempt$ and penalty parameter $r \geq 0$.
  How do we decide whether or not to give up?
  That is, how do we determine whether $x \in \Yopt$?
  On one hand, \cref{def:index} tells us that by serving the job,
  we can decrease its holding cost at expected rate $1/\rank$.
  On the other hand, giving up decreases the holding cost at rate~$1/r$.
  The natural conclusion is that giving up is optimal,
  i.e. $x \in \Yopt$, if and only if $\rank \geq r$.

  The intuition above turns out to be exactly right:
  $\rank$ is the maximum penalty parameter~$r$ such that
  giving up is still optimal when in state~$x$.%
}{
  For brevity, we simply state the connection below.%
}

\begin{lemma}
  \label{thm:game_rank}
  \leavevmode
  \begin{enumerate}[(i), beginpenalty=10000]
  \item
    For all $r \geq 0$, we can write the optimal give-up set as%
    \iftoggle{long}{
      \begin{equation*}
        \Yopt = \curlgp{x \in \X_\preempt \given \rank \geq r}.
      \end{equation*}%
    }{\linebreak
      $\Yopt = \curlgp{x \in \X_\preempt \given \rank \geq r}$.%
    }
  \item
    For all $x \in \X_\preempt\esub$, we can write the Gittins rank of~$x$ as%
    \iftoggle{long}{
      \begin{align*}
        \rank
        &= \max\curlgp{r \geq 0 \given x \in \Yopt} \\
        &= \max\curlgp{r \geq 0 \given \game = r \hold[x]} \iftoggle{long}{\\
        &= \inf\curlgp{r \geq 0 \given x \not\in \Yopt} \\
        &= \inf\curlgp{r \geq 0 \given \game < r \hold[x]}}{}.
      \end{align*}%
    }{\linebreak
      $\rank = \max\curlgp{r \geq 0 \given x \in \Yopt}$.%
    }
  \end{enumerate}
\end{lemma}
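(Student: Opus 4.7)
The plan is to prove both claims by directly unwinding \cref{def:Yopt,def:game,def:index} and relating the cost of immediate give-up to the Gittins index.

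For part~(i), I would fix $x \in \X_\preempt$ and $r \geq 0$. By \cref{def:Yopt}, $x \in \Yopt$ is equivalent to $\game = r\hold[x]$, and \cref{item:game_bound_hold} of \cref{thm:game_properties} already supplies $\game \leq r\hold[x]$ when $x \in \X_\preempt$; so membership in $\Yopt$ is equivalent to the matching lower bound $\game \geq r\hold[x]$. Unwinding the infimum in \cref{def:game}, this lower bound is equivalent to $\serve[x,\Y] + r\hold[x,\Y] \geq r\hold[x]$, or equivalently $r(\hold[x] - \hold[x,\Y]) \leq \serve[x,\Y]$, holding for every $\Y \subseteq \X_\preempt$. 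For $\Y$ containing $x$, \cref{def:serve_hold} gives $\serve[x,\Y] = 0$ and $\hold[x,\Y] = \hold[x]$, so the inequality is automatic. For $\Y \not\ni x$, dividing by $\serve[x,\Y] > 0$ yields $(\hold[x] - \hold[x,\Y])/\serve[x,\Y] \leq 1/r$; taking the supremum over such $\Y$ and applying \cref{def:index} gives $\ix \leq 1/r$, i.e., $\rank \geq r$. This establishes part~(i).

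For part~(ii), I would specialize part~(i) to a fixed $x \in \X_\preempt$, obtaining $\{r \geq 0 : x \in \Yopt\} = \{r \geq 0 : \rank \geq r\} = [0, \rank]$, whose maximum is $\rank$. The equivalent reformulation of this set as $\{r \geq 0 : \game = r\hold[x]\}$ is immediate from \cref{def:Yopt}.

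The one subtlety worth checking is that $x \in \Yopt[\rank]$, so the maximum in part~(ii) is genuinely attained. If the supremum defining $\ix$ is not attained, then every $\Y \not\ni x$ satisfies $(\hold[x] - \hold[x,\Y])/\serve[x,\Y] < \ix = 1/\rank$, and the chain of equivalences in part~(i) goes through at $r = \rank$. If the supremum is attained by some $\Y^\dagger$, then $\Y^\dagger$ ties with immediate give-up at cost $\rank\hold[x]$, so immediate give-up is still optimal. Either way $x \in \Yopt[\rank]$, which is the only non-routine point in the argument.
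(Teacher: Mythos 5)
Your proposal is correct: part~(i) follows by exactly the chain you give ($x \in \Yopt$ iff $\game \geq r\hold[x]$ by \cref{def:Yopt} and \cref{thm:game_properties}, iff every give-up set's cost is at least $r\hold[x]$, iff the supremum defining $\ix$ is at most $1/r$), and part~(ii), including attainment at $r = \rank$, is an immediate consequence since the equivalence in part~(i) holds at $r = \rank$ itself --- your case split on whether the supremum in \cref{def:index} is attained is harmless but unnecessary. Note that the paper does not prove this lemma at all, deferring to the similar Lemma~5.4 of \citet{scully_gittins_2020}, which proceeds by the same kind of definitional unwinding; your argument silently uses $r > 0$ and $\serve[x, \Y] > 0$ for $\Y \not\ni x$ when dividing, but the $r = 0$ case is trivial ($\Yopt[0] = \X_\preempt$) and the degenerate-$\serve$ case falls under the technical conditions the paper explicitly sets aside (\cref{sec:model:technical}), and the two $\inf$ characterizations in the long version of the statement follow at once from your identification of $\curlgp{r \geq 0 \given x \in \Yopt}$ with the interval $[0, \rank]$.
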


\iftoggle{long}{%
For brevity, we omit the proof of \cref{thm:game_rank}.
See \citet[Lemma~5.4]{scully_gittins_2020} for a similar proof.}{}

\section{Proving Gittins's Optimality}
\label{sec:optimality_proof}

We now prove \cref{thm:optimality},
namely that Gittins minimizes mean holding cost in the \sys{}.
Our proof has four steps.
\iftoggle{long}{\begin{itemize}
\item}{}%
  We begin by showing that minimizing mean holding cost $\E{H}$
  is equivalent to minimizing the mean \emph{preemptible} holding cost $\E{H_\preempt}$,
  which only counts the holding costs of jobs in preemptible states
  (\cref{sec:optimality_proof:hold_split}).
\iftoggle{long}{\item}{}%
  We define a new quantity called \emph{\r-work},
  the amount of work in the system ``below rank~$r$''
  (\cref{sec:optimality_proof:r-work}).
\iftoggle{long}{\item}{}%
  We show how to relate an integral of \r-work
  to the preemptible holding cost~$H_\preempt$,
  (\cref{sec:optimality_proof:r-work_hold})
  with more \r-work implying higher holding cost.
\iftoggle{long}{\item}{}%
  We show that Gittins minimizes mean \r-work for all $r \geq 0$,
  so it also minimizes~$\E{H}$
  (\cref{sec:optimality_proof:r-work_min}).
\iftoggle{long}{\end{itemize}}{}%

\subsection{Preemptible and Nonpreemptible Holding Costs}
\label{sec:optimality_proof:hold_split}

\begin{definition}
  \label{def:hold_preempt_nonpreempt}
  The system's \emph{preemptible holding cost}
  is the total holding cost of all jobs in the system whose states are preemptible.
  It has equilibrium distribution
  \iftoggle{long}{%
    \begin{equation*}
      H_\preempt = \sum_{i = 1}^N \1(X_i \in \X_\preempt) \hold[X_i],
    \end{equation*}%
  }{%
    $H_\preempt = \sum_{i = 1}^N \1(X_i \in \X_\preempt) \hold[X_i]$,%
  }
  where $\1$ is the indicator function.
  \iftoggle{long}{%
    The \emph{nonpreemptible holding cost} is defined analogously
    and has equilibrium distribution
    \begin{equation*}
      H_\nonpreempt = \sum_{i = 1}^N \1(X_i \in \X_\nonpreempt) \hold[X_i].
    \end{equation*}%
  }{%
    The \emph{nonpreemptible holding cost} is defined analogously as
    $H_\nonpreempt = \sum_{i = 1}^N \1(X_i \in \X_\nonpreempt) \hold[X_i]$.
  }
\end{definition}

Our goal is to show that Gittins minimizes mean holding cost
$\E{H} = \E{H_\preempt} + \E{H_\nonpreempt}$.
The lemma below shows that
$\E{H_\nonpreempt}$ is unaffected by the scheduling policy.
Minimizing $\E{H}$ thus amounts to minimizing~$\E{H_\preempt}$.

\begin{lemma}
  \label{thm:hold_nonpreempt}
  In the \sys{},
  the mean nonpreemptible holding cost
  is the same under all scheduling policies:
  \begin{equation*}
    \E{H_\nonpreempt}
    = \lambda \E*{\mathpar{14.25em}{%
      total cost a job accrues while in a nonpreemptible state during service}}.
  \end{equation*}
\end{lemma}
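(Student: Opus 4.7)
The plan is to use a Little's Law--style argument that exploits a structural constraint of nonpreemptible states: once a job enters one, the scheduler has no choice but to serve it until it leaves~$\X_\nonpreempt$.

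First, I would observe that at any instant, the only job whose state can lie in~$\X_\nonpreempt$ is the job currently in service. This is because all jobs arrive in preemptible states (\cref{sec:model:preempt}), and a job's state evolves only during service (\cref{sec:model:job}); any queued job therefore remains in its most recent served state, which must lie in~$\X_\preempt$.

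Second, I would argue that the per-job cost accrued in nonpreemptible states is policy-independent. Whenever a job's Markovian trajectory enters~$\X_\nonpreempt$, any nonclairvoyant scheduler is forced to serve that job continuously until the trajectory returns to $\X_\preempt \cup \{\xdone\}$; at the moment it returns, the scheduler may freely decide whom to serve next, but by then the nonpreemptible portion has already been traversed deterministically from the scheduler's standpoint. Consequently, for each individual job, the random total holding cost accrued while its state lies in~$\X_\nonpreempt$ --- call it $C_\nonpreempt$ --- is a functional of that job's own Markov trajectory starting from~$\Xnew$. Its distribution depends only on the job model, not on the policy, and $\E{C_\nonpreempt}$ is precisely the quantity appearing on the right-hand side of the claimed identity.

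Third, I would apply Little's Law to the time integral of $H_\nonpreempt$. Each job contributes exactly one realization of $C_\nonpreempt$ to this integral over its sojourn in the system, so the long-run time average of $H_\nonpreempt$ equals the throughput (which is~$\lambda$ under stability) times $\E{C_\nonpreempt}$. Ergodicity identifies this time-average with the stationary mean $\E{H_\nonpreempt}$, yielding the lemma. The main obstacle will be discharging the technical conditions for Little's Law and stationarity under the general Markov-process job model, in particular finiteness of $\E{C_\nonpreempt}$ (which follows because time spent in $\X_\nonpreempt$ is dominated by the total service time~$S$, assumed integrable). These foundational concerns are of the kind that \cref{sec:model:technical} already defers to the optimal-stopping and Markov-process literature, so once they are in place the argument is essentially immediate from Steps~1 and~2.
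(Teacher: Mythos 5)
Your proposal is correct and follows essentially the same route as the paper: apply the generalized Little's law ($H = \lambda G$) to the nonpreemptible holding cost, then use the observation that a job in a nonpreemptible state must be in service, which makes the per-job cost a functional of the job's own service trajectory and hence policy-independent. The extra detail you supply about time averages and ergodicity is just an unpacking of the cited Little's-law generalization.
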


\begin{proof}
  By a generalization of Little's law
  \iftoggle{long}{%
    \citep{brumelle_relation_1971, heyman_relation_1980}%
  }{%
    \citep{brumelle_relation_1971}%
  },
  \begin{equation*}
    \E{H_\nonpreempt}
    = \lambda \E*{\mathpar{11.5em}{%
      total cost a job accrues while in a nonpreemptible state}}.
  \end{equation*}
  The desired statement follows from the fact that
  if a job's state is nonpreemptible state, it must be in service
  (\cref{sec:model:preempt}).
\end{proof}

\subsection{Defining \r-Work}
\label{sec:optimality_proof:r-work}

\begin{definition}
  The \emph{(job) \r-work of state~$x$} is $\Serve$,
  namely the amount of service it requires
  to either complete or enter a preemptible state of rank at least~$r$.\footnote{%
    Strictly speaking,
    \cref{def:serve_hold, def:serve_hold_abbrev}
    introduce $\Serve$ as a distribution,
    so the \r-work of a job in state~$x$ is not $\Serve$ itself
    but rather a random variable with distribution $\Serve$.}
  The \emph{(system) \r-work} is
  the total \r-work of all jobs in the system.
  Its equilibrium distribution, denoted~$W(r)$, is
  \begin{equation*}
    W(r) = \sum_{i = 1}^N \Serve[X_i, r],
  \end{equation*}
  where $(X_1, \dots, X_N)$ is the equilibrium system state
  (\cref{sec:model:system}).
  We define the \emph{(system) preemptible \r-work}~$W_\preempt(r)$
  and \emph{(system) nonpreemptible \r-work}~$W_\nonpreempt(r)$
  similarly, except we only count \r-work from jobs
  in preemptible or nonpreemptible states:
  \iftoggle{long}{%
    \begin{align*}
      W_\preempt(r) &= \sum_{i = 1}^N \1(X_i \in \X_\preempt) \Serve[X_i, r], \\
      W_\nonpreempt(r) &= \sum_{i = 1}^N \1(X_i \in \X_\nonpreempt) \Serve[X_i, r].
    \end{align*}%
  }{%
    $W_\preempt(r) = \sum_{i = 1}^N \1(X_i \in \X_\preempt) \Serve[X_i, r]$
    and $W_\nonpreempt(r) = \sum_{i = 1}^N \1(X_i \in \X_\nonpreempt) \Serve[X_i, r]$.%
  }
\end{definition}

\begin{lemma}
  \label{thm:r-work_conditional_expectation}
  For all $r \geq 0$,
  \begin{equation*}
    \E{W_\preempt(r)} = \E*{\sum_{i = 1}^N \1(X_i \in \X_\preempt) \serve[X_i, r]}.
  \end{equation*}
\end{lemma}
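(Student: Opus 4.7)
The plan is to apply the tower property of conditional expectation, conditioning on the equilibrium system state $(X_1, \dots, X_N)$. The random variable $\Serve[X_i, r]$ represents the service a job starting in state $X_i$ would need to first enter $\Yopt \cup \{\xdone\}$, and by Definition~\ref{def:serve_hold_abbrev} its mean (given $X_i$) is exactly $\serve[X_i, r]$. So the lemma is essentially a statement of linearity of expectation combined with the Markov property.

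First, I would write
\[
  \E{W_\preempt(r)} = \E*{\sum_{i=1}^N \1(X_i \in \X_\preempt)\, \Serve[X_i, r]}
\]
and condition on the system state. By the strong Markov property of the job process (\cref{sec:model:job}) and the fact that jobs' future evolutions are independent of each other and of the arrival process, the conditional distribution of $\Serve[X_i, r]$ given $(X_1, \dots, X_N)$ depends only on $X_i$. Hence
\[
  \E{\Serve[X_i, r] \given (X_1, \dots, X_N)} = \E{\Serve[X_i, r] \given X_i} = \serve[X_i, r].
\]

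Second, I would apply linearity of expectation inside the conditional expectation and then take the outer expectation, using the tower property:
\[
  \E{W_\preempt(r)}
  = \E*{\sum_{i=1}^N \1(X_i \in \X_\preempt)\, \E{\Serve[X_i, r] \given (X_1, \dots, X_N)}}
  = \E*{\sum_{i=1}^N \1(X_i \in \X_\preempt)\, \serve[X_i, r]}.
\]
The indicator $\1(X_i \in \X_\preempt)$ is measurable with respect to the conditioning $\sigma$-algebra, so it passes through the conditional expectation without issue.

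The only potential obstacle is a measurability concern: we need that the random variables $\Serve[X_i, r]$ be well-defined on the same probability space as the system state, with the claimed conditional independence structure. This is part of the technical foundations that Section~\ref{sec:model:technical} defers to the optimal stopping literature, and we can appeal to the standard construction in which each job carries its own independent Markov-process driver, decoupling the state-evolution randomness across jobs and from the arrival process. With this setup the conditional-expectation manipulation is routine.
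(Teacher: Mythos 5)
Your proposal is correct and follows essentially the same route as the paper: the paper's proof likewise invokes the law of total expectation together with the identity $\E{\Serve[X_i, r] \given X_i} = \serve[X_i, r]$. Your additional remarks on conditional independence across jobs and measurability merely make explicit what the paper leaves implicit.
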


\begin{proof}
  This follows from the law of total expectation
  and the fact that $\E{\Serve[X_i, r] \given X_i} = \serve[X_i, r]$.
\end{proof}

\subsection{Relating \r-Work to Holding Cost}
\label{sec:optimality_proof:r-work_hold}

\begin{theorem}
  \label{thm:r-work_hold}
  In the \sys{}, under all nonclairvoyant policies,
  \begin{equation*}
    \E{H_\preempt}
    = \int_0^\infty \frac{\E{W_\preempt(r)}}{r^2} \d{r}.
  \end{equation*}
\end{theorem}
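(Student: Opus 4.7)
The plan is to reduce the claimed identity to a per-state identity
\begin{equation*}
  \hold[x] = \int_0^\infty \frac{\serve[x, r]}{r^2} \d{r} \quad \text{for every } x \in \X_\preempt,
\end{equation*}
and then lift it to the system level. By \cref{thm:r-work_conditional_expectation}, $\E{W_\preempt(r)} = \E{\sum_{i = 1}^N \1(X_i \in \X_\preempt)\serve[X_i, r]}$. Since every term is nonnegative, Tonelli's theorem justifies swapping the $r$-integral with the expectation and the sum, so the per-state identity immediately yields the theorem.

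For the per-state identity, I would combine three facts about the Gittins game from \cref{sec:game}. First, by \cref{def:serve_hold_abbrev} and optimality of $\Yopt$, for $x \in \X_\preempt$ we have $\game = \serve + r \hold$. Second, by \cref{thm:game_derivative}, $r \mapsto \game$ is differentiable almost everywhere with $\dd{r}\game = \hold$. Eliminating $\hold$ gives $\serve = \game - r\,\dd{r}\game$, so
\begin{equation*}
  \frac{\serve[x, r]}{r^2} = -\dd{r}\!\left(\frac{\game}{r}\right) \quad \text{a.e. in } r.
\end{equation*}
Third, $\game$ is concave in $r$ by \cref{thm:game_properties}~\cref{item:game_concave}, hence locally absolutely continuous on $(0, \infty)$, so $r \mapsto \game/r$ is too, and integrating the displayed derivative yields $\int_0^\infty \serve[x,r]/r^2 \d{r} = \lim_{r \to 0^+}\game/r - \lim_{r \to \infty}\game/r$.

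The main obstacle is evaluating these two boundary limits cleanly. Note first that $x \in \X_\preempt$ implies $\Yopt[0] = \X_\preempt \ni x$, so $\serve[x, 0] = 0$ and hence $\game[x, 0] = 0$; by concavity this forces the chord slope $\game/r$ to be nonincreasing in $r$. For the limit at $\infty$, the bound $\game \leq \serve[x, \emptyset]$ from \cref{thm:game_properties}~\cref{item:game_bound_serve}, together with the finiteness of the expected remaining service from $x$ (a consequence of $\E{S} < \infty$), forces $\game/r \to 0$. For the limit at $0^+$, the bound $\game \leq r\hold[x]$ from \cref{thm:game_properties}~\cref{item:game_bound_hold} gives $\game/r \leq \hold[x]$, while the right derivative $\dd{r}\game|_{r = 0^+} = \hold[x, \X_\preempt] = \hold[x]$ combined with the monotonicity of the chord slope gives the matching lower bound, so $\game/r \to \hold[x]$. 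Substituting these two limits proves the per-state identity, and hence the theorem.
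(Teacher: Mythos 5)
Your proof is correct and follows essentially the same route as the paper's: reduce to the per-state identity via \cref{thm:r-work_conditional_expectation}, observe that \cref{thm:game_derivative} gives $\dd{r}(\game/r) = -\serve/r^2$, and evaluate the resulting boundary terms using \cref{thm:game_properties}. Your treatment of the $r \to 0^+$ and $r \to \infty$ limits and of absolute continuity is just a more explicit version of the steps the paper leaves terse.
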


\begin{proof}
  By \cref{thm:r-work_conditional_expectation, def:hold_preempt_nonpreempt},
  it suffices to show that for all $x \in \X_\preempt$,
  \begin{equation}
    \label{eq:r-work_hold_job}
    \hold[x] = \int_0^\infty \frac{\serve}{r^2} \d{r}.
  \end{equation}
  Using \cref{thm:game_derivative}, we compute
  \begin{equation*}
    \dd{r} \frac{\game}{r} = \frac{r \hold - \game}{r^2} = \frac{-\serve}{r^2}.
  \end{equation*}
  This means the integral in \cref{eq:r-work_hold_job}
  becomes a difference between two limits.
  \iftoggle{long}{%
    Using \cref{thm:game_derivative} for the $r \to 0$ limit
    and \cref{thm:game_properties}\cref{item:game_bound_serve}
    for the $r \to \infty$ limit,
    we obtain%
  }{%
    Using \cref{thm:game_derivative, thm:game_properties},
    we compute%
  }
  \begin{align*}
    \int_0^\infty \frac{\serve}{r^2} \d{r}
    &= \lim_{r \to 0} \frac{\game}{r} - \lim_{r \to \infty} \frac{\game}{r} \\
    &= \hold[x, 0] - 0
    \iftoggle{long}{\\ &}{}
    = \hold[x].
    \qedhere
  \end{align*}
\end{proof}

\Cref{thm:r-work_hold} implies that to minimize $\E{H_\preempt}$,
it suffices to minimize $\E{W_\preempt(r)} = \E{W(r)} - \E{W_\nonpreempt(r)}$ for all $r \geq 0$.
It turns out that $\E{W_\nonpreempt(r)}$, much like $\E{H_\nonpreempt}$,
is unaffected by the scheduling policy,
so it suffices to minimize $\E{W(r)}$.
\iftoggle{long}{}{%
We omit the proof,
as it is very similar to that of \cref{thm:hold_nonpreempt}.}

\begin{lemma}
  \label{thm:r-work_nonpreempt}
  In the \sys{}, the mean nonpreemptible \r-work $\E{W_\nonpreempt(r)}$
  is the same under all scheduling policies.
\end{lemma}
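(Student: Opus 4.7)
The plan is to mirror the proof of \cref{thm:hold_nonpreempt} almost verbatim, swapping the ``reward'' function $\1(x \in \X_\nonpreempt)\hold[x]$ for $\1(x \in \X_\nonpreempt)\serve[x, r]$. First, by \cref{thm:r-work_conditional_expectation},
\begin{equation*}
  \E{W_\nonpreempt(r)}
  = \E*{\sum_{i = 1}^N \1(X_i \in \X_\nonpreempt) \serve[X_i, r]},
\end{equation*}
so it suffices to show that this expectation is the same under every nonclairvoyant policy.

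Next, I would apply the same generalization of Little's law cited in the proof of \cref{thm:hold_nonpreempt}, but with state-functional $\serve[\cdot, r]$ in place of $\hold[\cdot]$, to obtain
\begin{equation*}
  \E{W_\nonpreempt(r)}
  = \lambda \E*{\mathpar{15em}{total $\serve[\cdot, r]$ a job accrues while in a nonpreemptible state}}.
\end{equation*}
Because a job can only be in a nonpreemptible state while in service (\cref{sec:model:preempt}), I can freely insert ``during service'' in the description on the right, exactly as in \cref{thm:hold_nonpreempt}.

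The final step is to observe that, by the job model of \cref{sec:model:job}, a job's state trajectory during its service time is governed by the intrinsic Markov dynamics and is independent of the scheduler's choices, the arrival process, and other jobs' evolution. Hence the distribution of the total $\serve[\cdot, r]$ a tagged job accrues while in a nonpreemptible state during service is a function of the job model alone, yielding policy-invariance of $\E{W_\nonpreempt(r)}$. I do not anticipate any real obstacle: the only care needed is to check that $\serve[\cdot, r]$ is a suitable measurable state functional so the generalized Little's law applies, which is the same regularity concern already implicit in the $\hold[\cdot]$ case and is covered by the technical foundations discussed in \cref{sec:model:technical}.
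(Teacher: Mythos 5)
Your proposal is correct and is exactly the argument the paper intends: the paper omits this proof on the grounds that it is the proof of \cref{thm:hold_nonpreempt} repeated with the state functional $\1(x \in \X_\nonpreempt) \serve[x, r]$ in place of $\1(x \in \X_\nonpreempt) \hold[x]$, which is what you do, including the key observations that nonpreemptible states occur only in service and that the in-service state trajectory is policy-independent. The only cosmetic point is that \cref{thm:r-work_conditional_expectation} is stated for preemptible \r-work, so you should invoke its nonpreemptible analogue, which follows by the same law-of-total-expectation step.
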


\iftoggle{long}{%
We omit the proof of \cref{thm:r-work_nonpreempt},
as it very similar to that of \cref{thm:hold_nonpreempt}.}{}

\subsection{Gittins Minimizes Mean \r-Work}
\label{sec:optimality_proof:r-work_min}

\Cref{thm:hold_nonpreempt, thm:r-work_hold, thm:r-work_nonpreempt}
together imply that if a scheduling policy
minimizes mean \r-work $\E{W(r)}$ for all $r \geq 0$,
then it minimizes mean holding cost~$\E{H}$.
We show that Gittins does exactly this, implying Gittins's optimality.

\begin{theorem}
  \label{thm:r-work_min}
  The Gittins policy minimizes mean \r-work in the \sys{}.
  That is, for all scheduling policies~$\pi$ and $r \geq 0$,
  \begin{equation*}
    \E{W^{\mathrm{Gittins}}(r)} \leq \E{W^\pi(r)}.
  \end{equation*}
\end{theorem}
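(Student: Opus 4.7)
The plan is a work-conservation argument for $r$-work: show that Gittins depletes $r$-work at the fastest possible rate whenever any is present, and hence minimizes $\E{W(r)}$ in steady state.

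First I would derive a drift formula for $\E{W^\pi_t(r)}$, splitting it into an arrival contribution and a service contribution. The arrival contribution depends only on $\lambda$ and the $\Xbatch$ distribution, so it is policy-independent. The service contribution is the expected rate of change of $\serve[X^\pi, r]$ for the job currently in service. Since $\serve[x, r] = \E{\Serve[x, \Yopt]}$ is an expected first-passage time of the job Markov process to $\Yopt \cup \{\xdone\}$, Dynkin's formula for first-passage times yields that this drift equals $-1$ exactly when the served state lies in $\X \setminus \Yopt$, and is nonnegative when the served state lies in $\Yopt$ (because $\serve[\cdot, r] \geq 0$ attains its minimum value $0$ on $\Yopt$, so the Markov dynamics cannot pull it any lower).

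The key invariant is that, under Gittins, whenever $W^{\mathrm{Gittins}}_t(r) > 0$ the job in service lies in $\X \setminus \Yopt$. Three cases establish this. If a nonpreemptible job is in service, it automatically lies in $\X \setminus \Yopt$ since $\Yopt \subseteq \X_\preempt$, and moreover $\serve[x, r] > 0$ for every $x \in \X_\nonpreempt$ because leaving $\X_\nonpreempt$ takes strictly positive service. If every job is preemptible and at least one has rank $< r$, Gittins serves the minimum-rank job, which lies outside $\Yopt$ by \cref{thm:game_rank}. And if no job lies outside $\Yopt$, then $W^{\mathrm{Gittins}}_t(r) = 0$ directly from the definition. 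Consequently the service drift under Gittins is $-\P{W^{\mathrm{Gittins}}_t(r) > 0}$, while under any $\pi$ it is at least $-\P{W^\pi_t(r) > 0}$, possibly with extra nonnegative contribution from times when $\pi$ serves a state in $\Yopt$.

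To pass from this drift comparison to the steady-state inequality, I would couple the two systems so that they share arrival times, initial states, and intrinsic Markov-process realizations for each job; such a coupling is well-defined because a job's state is determined by its received service, independently of policy. Under this coupling, $W^{\mathrm{Gittins}}_t(r)$ behaves in expectation as the reflected workload of a Lindley recursion driven by the common arrival input, while $W^\pi_t(r)$ depletes no faster, yielding $\E{W^{\mathrm{Gittins}}(r)} \leq \E{W^\pi(r)}$ by a standard rate-conservation argument. The main obstacle is precisely this last step: $W^\pi_t(r)$ is not monotone between arrivals, since the Markov process of each served job can jump and $\serve[X^\pi_t, r]$ fluctuates stochastically even while the served state stays outside $\Yopt$, so the reflected-workload picture holds only in expectation. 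I expect to handle this by explicitly tracking the martingale term in the Dynkin decomposition of $\serve[X^\pi_t, r]$, which vanishes in expectation and thus lets the pointwise drift inequality integrate cleanly into the steady-state mean comparison.
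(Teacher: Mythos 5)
There is a genuine gap in your final step. The drift comparison you set up cannot, on its own, yield the stationary mean comparison: in steady state the net drift of $r$-work is zero under \emph{every} policy, so tracking the Dynkin martingale and "integrating the pointwise drift inequality" only recovers a first-order rate-conservation identity (expected input rate of $r$-work equals expected depletion rate). That identity pins down quantities like $\P{\text{the served job is \emph{not} in } \Yopt}$, but it says nothing about $\E{W^\pi(r)}$ itself. What actually distinguishes policies is the \emph{level} of $r$-work at the instants when served jobs jump out of $\Yopt$ (the $r$-recyclings): under Gittins these occur only when $r$-work is zero, while $\pi$ may recycle at positive levels, and converting that timing difference into an inequality between stationary means requires a second-order or Palm-type argument that your proposal does not supply. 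Relatedly, your description of $W^{\mathrm{Gittins}}(r)$ as the reflected Lindley workload driven by the arrival input alone is inaccurate: recycled work contributes to $r$-work under Gittins as well, just injected at zero-work instants, so even under Gittins the process is not the plain arrival-driven M/G/1 workload.

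The paper closes exactly this gap by interpreting each $r$-recycling as a generalized vacation and invoking the decomposition result of \citet[Theorem~3.3]{miyazawa_decomposition_1994}, which gives
$\E{W^\pi(r)} = c_1 + c_2\,\E{\text{$r$-work sampled just before $\pi$ $r$-recycles a job}}$
with $c_1, c_2$ policy-independent (the recycling amounts and batch inputs are determined by the jobs' own trajectories, hence coupled across policies as you note); since the policy-dependent expectation is nonnegative and equals zero under Gittins, \cref{thm:r-work_min} follows. Your invariant --- Gittins depletes $r$-work at rate $1$ whenever it is positive and recycles only at zero, by \cref{thm:game_rank} and $\Yopt \subseteq \X_\preempt$ --- is correct and is the same key observation the paper uses, but it is only half of the argument. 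If you wish to complete your route without citing the decomposition theorem, apply rate conservation to $W(r)^2/2$ (in the spirit of Brumelle's formula) rather than to $W(r)$: the cross term involving the $r$-work level at recycling epochs is precisely the policy-dependent term, nonnegative in general and vanishing under Gittins, with PASTA handling the Poisson batch arrivals.
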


Before proving \cref{thm:r-work_min},
we introduce the main ideas behind the proof.
For the rest of this section, fix arbitrary $r \geq 0$.
We classify jobs in the system into two types.
\begin{itemize}
\item
  A job is \emph{\r-good} if it is nonpreemptible
  or has Gittins rank less than~$r$,
  i.e. its state is in $\X \setminus \Yopt$.
\item
  A job is \emph{\r-bad} jobs if it has Gittins rank at least~$r$,
  i.e. its state is in $\Yopt$.
\end{itemize}
During service, a job may alternate between being \r-good and \r-bad.
Gittins minimizes \r-work because
the jobs that contribute to \r-work are exactly the \r-good jobs,
and Gittins always prioritizes \r-good jobs over \r-bad jobs.
This means that whenever the amount of \r-work in the system is positive,
Gittins decreases it at rate~$1$, which is as quickly as possible.

Given that Gittins decreases \r-work as quickly as possible,
does \cref{thm:r-work_min} immediately follow?
The answer is no: we need to look not just at how \r-work decreases
but also at how it increases.
Two types of events increase \r-work.
\begin{itemize}
\item
  Arrivals can add \r-work to the system.
\item
  \label{def:recycling}
  During service, a job can transition from being \r-bad to being \r-good
  as its state evolves.
  Using the terminology of \citet{scully_soap_2018, scully_gittins_2020},
  we say call this \emph{\r-recycling} the job.
  Every \r-recycling adds \r-work to the system.
\end{itemize}
Arrivals are outside of the scheduling policy's control,
but \r-recyclings occur at different times under different scheduling policies.
Because Gittins prioritizes \r-good jobs over \r-bad jobs,
all \r-recyclings occur when there is zero \r-work.
It turns out that because the batch arrival process is Poisson,
this \r-recycling timing minimizes mean \r-work.

\begin{proof}[Proof of \cref{thm:r-work_min}]
  We are comparing Gittins to an arbitrary scheduling policy~$\pi$.
  It is convenient to allow $\pi$ to be more powerful than an ordinary policy:
  we allow $\pi$ to devote infinite processing power to \r-bad jobs.
  This has two implications:
  \begin{itemize}
  \item
    Whenever there is \r-work in the system,
    $\pi$ controls at what rate it decreases,
    where $1$ is the maximum rate.
  \item
    Regardless of the rate at which \r-work is decreasing,
    whenever there is an \r-bad job in the system,
    $\pi$ controls at what moment in time it either completes or is \r-recycled.
  \end{itemize}
  A straightforward interchange argument shows that it suffices
  to only compare against policies~$\pi$ which are ``\r-work-conserving'',
  meaning they decrease \r-work at rate~$1$ whenever \r-work is nonzero.
  Gittins is also \r-work-conserving.

  It remains only to show that among \r-work-conserving policies,
  mean \r-work is minimized by only \r-recycling jobs when \r-work is zero.
  This follows from classic decomposition results
  for the M/G/1 with generalized vacations
  \citep{miyazawa_decomposition_1994}.
  We first explain how to view the \r-work in the \sys{} as
  the virtual work in a vacation system.\footnote{%
    \emph{Virtual work} in a vacation system
    is total remaining service time of all jobs in the system
    plus, if a vacation is in progress, remaining vacation time.}
  \begin{itemize}
  \item
    Interpret a batch adding $s$ \r-work to the \sys{}
    as an arrival of service time~$s$ in the vacation system.
  \item
    Interpret an \r-recycling adding $v$ \r-work to the \sys{}
    as a vacation of length~$v$ in the vacation system.
  \end{itemize}
  Using the above interpretation,
  a vacation system result
  of \citet[Theorem~3.3]{miyazawa_decomposition_1994} implies
  \begin{equation*}
    \E{W^\pi(r)} = c_1 + c_2 \E*{\mathpar{12em}{\r-work sampled immediately before $\pi$ \r-recycles a job}},
  \end{equation*}
  where $c_1$ and $c_2$ are constants
  that depend on the system parameters but not on the scheduling policy~$\pi$.
  Because Gittins prioritizes \r-good jobs over \r-bad jobs,
  Gittins only \r-recycles when \r-work is zero.
  This means the expectation on the right-hand side is zero under Gittins.
  But the expectation is nonnegative in general,
  so Gittins minimizes mean \r-work.
\end{proof}






\section{Conclusion}
\label{sec:conclusion}

We have given the first fully general statement (\cref{thm:optimality})
and proof of Gittins's optimality in the M/G/1.
This simultaneously improves upon, unifies, and generalizes prior proofs,
all which either apply only in special cases
or require limiting technical assumptions
(\cref{sec:prior}).

We believe Gittins's optimality holds
even more generally than we have shown.
For example, our proof likely generalizes to settings with ``branching'' jobs
or additional priority constraints on the scheduler
\citep[Section~4.7]{gittins_multi-armed_2011}.
It is also sometimes possible to strengthen the sense in which Gittins is optimal.
For example, SRPT is optimal for non-Poisson arrival times,
and Gittins sometimes stochastically minimizes holding cost
in addition to minimizing the mean.

\footnotesize
\bibliographystyle{IEEEtranN}
\bibliography{refs}

\end{document}